\documentclass[a4paper]{amsart}

\usepackage{amsmath, amsthm, amssymb}
\usepackage{mathscinet}
\usepackage{xspace}
\usepackage{xcolor}
\usepackage{tikz, pgf}
\usepackage{mathrsfs}
\usetikzlibrary{arrows}
\usetikzlibrary{arrows.meta}
\usepackage{pgfplots}
\usepgfplotslibrary{external}
\usepackage{graphicx}
\usepackage[normalem]{ulem}
\usepackage{booktabs}

\usepackage[style=alphabetic,
sorting=nty,
backend=biber,
maxbibnames=99,
doi=false,
isbn=false,
backref=false]{biblatex}
\addbibresource{draft.bib}


\newcommand\mptopcom{\texttt{MPTOPCOM}\xspace}
\newcommand\topcom{\texttt{TOPCOM}\xspace}
\newcommand\polymake{\texttt{polymake}\xspace}
\newcommand\mts{\texttt{mts}\xspace}

\def\R{\mathbb{R}}
\newcommand{\face}{\sigma}
\newcommand{\triang}{T}
\DeclareMathOperator{\conv}{conv}
\DeclareMathOperator{\aff}{aff}
\DeclareMathOperator{\MC}{MC}
\DeclareMathOperator{\vol}{vol}

\def\CC{\mathbb C}
\def\PP{\mathbb P}
\def\RR{\mathbb R}
\def\ZZ{\mathbb Z}

\newcommand{\tDt}{\mathcal{A}}
\newcommand{\AD}{\Delta_{\tDt}}
\newcommand{\Sfour}{\text{S}_4}

\newcommand{\twoDeltaTwoTriang}[1]{
	\coordinate (A-00) at (0,0);
	\coordinate (A-01) at (0,1);
	\coordinate (A-02) at (0,2);
	\coordinate (A-10) at (1,0);
	\coordinate (A-11) at (1,1);
	\coordinate (A-20) at (2,0);			
	\draw (A-00) -- (A-20) -- (A-02) -- cycle;
	\foreach \i/\j in {#1}{
		\draw (A-\i)--(A-\j);
	}
	\begin{scriptsize}
		\draw [fill=violet!20] (A-00) circle (2.5pt);
		\draw [fill=violet!20] (A-02) circle (2.5pt);
		\draw [fill=violet!20] (A-01) circle (2.5pt);
		\draw [fill=violet!20] (A-10) circle (2.5pt);
		\draw [fill=violet!20] (A-20) circle (2.5pt);
		\draw [fill=violet!20] (A-11) circle (2.5pt);
	\end{scriptsize}
}

\definecolor{noteColor}{rgb}{0,0.6,0.5}
\definecolor{mygreen}{rgb}{0,0.5,0.0}

\DeclareMathOperator{\gkz}{GKZ}
\newcommand{\group}{G}

\title{The Newton polytope of the discriminant of a quaternary cubic form}

\thanks{Research by the first author is supported by DFG via SFB-TRR 195: ``Symbolic Tools in Mathematics and their Application''. The second author is supported by DFG via SFB-TRR 109: ``Discretization in Geometry and Dynamics''.}

\subjclass[2010]{14Q10; 52B55}
\keywords{triangulations of point configurations; mptopcom; discriminant; quaternary cubic; gkz vectors}


\author{Lars Kastner}
\author{Robert L\"owe}
\address{%
Institut f{\"u}r Mathematik\\
TU Berlin\\
Str.\ des 17. Juni 136, 10623 Berlin, Germany
}
\email{\{kastner,loewe\}@math.tu-berlin.de}


\newcounter{thmcounter}[section]

\newtheorem{que}[thmcounter]{Question}
\newtheorem{thm}[thmcounter]{Theorem}
\newtheorem{dfn}[thmcounter]{Definition}
\newtheorem{exa}[thmcounter]{Example}
\newtheorem{lemma}[thmcounter]{Lemma}
\newtheorem{prop}[thmcounter]{Proposition}

\usepackage{hyperref}

\begin{document}

\begin{abstract}
We determine the $166\,104$ extremal monomials of the discriminant
of a quaternary cubic form.
These are in bijection with $D$-equivalence classes of regular triangulations of the $3$-dilated tetrahedron.
We describe how to compute these triangulations and their $D$-equivalence classes in order to arrive at our main result. 
The computation poses several challenges, such as dealing with the sheer amount of triangulations effectively, as well as devising a suitably fast algorithm for computation of a $D$-equivalence class.
\end{abstract}

\maketitle


\section{Introduction}
The $A$-discriminant $\Delta_A$ is a homogeneous polynomial associated to an integral point configuration $A \subset \ZZ_{\geq 0}^d$ that detects singular polynomials supported on $A$.
The most popular example is the case of a quadratic polynomial $f=ax^2+bx+c$ supported on $A=\{0,1,2\}$, in which case the $A$-discriminant is $\Delta_A=b^2-4ac$.
Here the discriminant vanishes if and only if $f$ has a double root.

In general, $\Delta_A$ can be defined as follows.
Let $f = \sum_{a \in A} c_a \mathbf{x}^a$ be a polynomial supported on~$A$.
Then the associated hypersurface $V(f)$ is either singular or smooth, depending on the choice of coefficients $c \in \CC^A$.
The subset of $\CC^A$ consisting of those coefficients that define singular $A$-hypersurfaces has the structure of an affine cone over a projective hypersurface $\nabla_A \subset \PP(\CC^{A})$, called the \emph{discriminantal hypersurface} of~$A$.
Now the \emph{$A$-discriminant} $\Delta_A \in \CC [c_a ~|~ a\in A]$ is defined as the irreducible integral polynomial defining the discriminantal hypersurface, i.e. $V(\Delta_A) = \nabla_{A}$.
In this way $\Delta_A$ is defined uniquely up to sign.

Except for a few special cases, it is quite cumbersome to write down $A$-discriminants in an expanded form, simply because they are too large.
For example, the discriminant of a ternary cubic form, in which case $A=\{z\in \ZZ_{\geq 0}^3~|~z_1+z_2+z_3=3\}$, 
is a degree $12$ polynomial of $2040$ monomials \cite[Chapter 11]{GKZ}.

In this article we are concerned with the discriminant $\AD$ of a quaternary cubic form supported on 
\begin{equation}\label{eq:3Delta3}
\tDt = \{ z \in \ZZ^4_{\geq 0} ~|~ z_1+z_2+z_3+z_4=3 \} \enspace ,
\end{equation}
the set of integral points of the $3$-dilated standard tetrahedron embedded in~$\RR^4$.
Throughout this note $\tDt$ denotes the point configuration defined in \eqref{eq:3Delta3}, whereas $A$ is used for arbitrary point configurations.

The symmetric group on four letters $\Sfour$ acts on $\tDt$ by coordinate permutation.
This action naturally extends to the discriminant $\AD$.
We are concerned with the following question:
\begin{que} \label{question3}
	How many extremal monomials does the discriminant $\AD$ have? 
	Furthermore, how many $\Sfour$-orbits of extremal monomials are there? 
\end{que} 

To answer this question we use combinatorial tools developed in \cite[Chapter 11]{GKZ}.
The key observation of Gel$\cprime$fand, Kapranov and Zelevinsky is that the extremal monomials of the $A$-discriminant are in bijection with so called $D$-equivalence classes of regular triangulations of $A$.
With this at hand, it suffices to compute all regular triangulations of $A$.
The latter task turns out to be a computationally challenging problem, in general.
However, using the recently developed software \mptopcom \cite{mptopcom} we manage to compute all regular triangulations of the point configuration $\tDt$ in question. 
Finally, we efficiently compute the number of $D$-equivalence classes to arrive at our main result.

\begin{thm}\label{thm:main}
	The discriminant $\AD$ has $166\,104$ extremal monomials that split into
	$7\,132$ $\Sfour$-orbits. Equivalently, $\tDt$ admits $166\,104$ $D$-equivalence classes of regular triangulations.
\end{thm}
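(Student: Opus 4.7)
The plan is to reduce the problem to pure combinatorics via the Gel$'$fand--Kapranov--Zelevinsky correspondence stated in the paragraph before Theorem~\ref{thm:main}: extremal monomials of $\AD$ stand in bijection with $D$-equivalence classes of regular triangulations of $\tDt$, and the natural $\Sfour$-action on $\AD$ matches the $\Sfour$-action on the set of such classes. Hence the two numbers $166\,104$ and $7\,132$ must be obtained by (i) enumerating the regular triangulations of $\tDt$, (ii) partitioning them into $D$-equivalence classes, and (iii) passing to $\Sfour$-orbits.

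For step (i) the plan is to invoke the parallel reverse-search enumerator \mptopcom on the point configuration $\tDt$ of $20$ lattice points. Since a naive enumeration would be prohibitive, the computation should be carried out modulo the $\Sfour$-symmetry of $\tDt$: only one representative per symmetry class of regular triangulations is generated, together with its orbit length, so that the total count of regular triangulations can be recovered by summation. The output of this phase is a list of $\Sfour$-representatives of regular triangulations equipped with the information needed in the next step (for example, the maximal simplices together with their normalized volumes, from which GKZ vectors can be read off).

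For step (ii) the plan is to fix a combinatorial invariant that is constant on $D$-equivalence classes and fine enough to separate them, and then bucket triangulations accordingly. The natural candidate is the secondary cone (or a witness like the GKZ vector together with its wall structure); two regular triangulations are $D$-equivalent precisely when their secondary cones agree modulo the lineality coming from affine functions on~$\tDt$. The main obstacle of the whole paper lives here: the number of regular triangulations is enormous, so the $D$-equivalence test must be implemented as a fast, hashable comparison rather than a cone-equality check between every pair. Designing such a test, and verifying that the resulting partition is indeed the $D$-equivalence partition (not a coarser or finer one), is the delicate step and is what the subsequent sections of the paper will have to address.

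For step (iii), once the $D$-equivalence classes have been produced in step (ii), I would let $\Sfour$ act on the set of classes via its action on triangulations and count orbits, e.g.\ by Burnside or by union-find over the generated orbits. Adding the orbit sizes returns the total $166\,104$, while the number of orbits returns $7\,132$. Because steps (i) and (iii) already exploit $\Sfour$-symmetry throughout, these two counts come out of the same computation and serve as a consistency check on each other. The proof of Theorem~\ref{thm:main} is therefore complete once the $\Sfour$-reduced enumeration of \mptopcom and the $D$-equivalence bucketing from step (ii) have been carried out and cross-validated.
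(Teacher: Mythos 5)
Your overall architecture matches the paper: invoke the GKZ correspondence (Theorem~\ref{thm:d_equiv_vert}) to translate extremal monomials into $D$-equivalence classes, enumerate regular triangulations of $\tDt$ up to $\Sfour$-symmetry with \mptopcom (the paper gets $910\,974\,879$ orbits, Theorem~\ref{thm:num_reg_triang}), then bucket by a $D$-equivalence invariant and count orbits and classes. However, your step (ii) contains a genuine error: you propose to test $D$-equivalence by asking whether two regular triangulations have the same secondary cone modulo the lineality space of affine functions. That relation is simply equality of triangulations --- the maximal cones of the secondary fan (already taken modulo that lineality) are in bijection with regular triangulations, so your proposed invariant would split the $910$ million triangulations into singleton classes, not into the $166\,104$ classes of the theorem. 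The same objection applies to your fallback of using the ordinary $\gkz$ vector: distinct regular triangulations have distinct $\gkz$ vectors, since these are distinct vertices of the secondary polytope. $D$-equivalence is strictly coarser; the correct invariant is the \emph{massive} $\gkz$ vector $\eta_\triang$ of Definition~\ref{def:massive_gkz}, an alternating sum over massive faces of all dimensions, and it is Theorem~\ref{thm:d_equiv_vert} that identifies its values with the vertices of $\mathcal{N}(\Delta_{\tDt})$.

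You also defer the feasibility of the bucketing step to ``what the subsequent sections will have to address,'' but that is precisely where the paper's technical contribution lies and it cannot be waved through: evaluating \eqref{eq:def_massive_gkz} naively (Hasse diagram, massivity tests, lattice volumes) for $9\cdot 10^8$ representatives is estimated in the paper at years of CPU time. The paper's solution is the massive-chain machinery: Lemma~\ref{lemma:mc} shows the number of massive chains starting in a massive face depends only on the carrying face of $Q=\conv(\tDt)$, and Proposition~\ref{prop:massive_chain_application} rewrites $\eta_{\triang,j}(a_i)$ as a sum over the full-dimensional simplices of $\triang$ of precomputable, cacheable contributions $\eta_i^j(\sigma^d)$, reducing each massive $\gkz$ vector to table lookups and additions (roughly a factor $20$ speedup in practice). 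So to repair your proof you must (a) replace the secondary-cone test by equality of massive $\gkz$ vectors, and (b) supply an argument or method --- such as the massive-chain decomposition --- that makes computing these vectors for all orbit representatives tractable; with those in place, the orbit counting in your step (iii) goes through as you describe.
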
 

Actually, our computation yields a list of exponent vectors of the extremal monomials.
Once these exponent vectors are known, it is also possible to explicitly write down the corresponding coefficients \cite[Theorem 11.3.2]{GKZ}, but we will not be concerned with this.

Representations of the discriminant $\AD$ are known in various fashions.
By definition, $\AD$ equals the resultant of the four quadratic partial derivatives $f_{x_1}$, $f_{x_2}$, $f_{x_3}$, $f_{x_4}$ of a generic polynomial $f$ supported on $\tDt$.
Back in $1899$ Nanson \cite{Nanson} gave a representation of this resultant (and thereby of $\AD$) as the determinant of an $20\times20$ matrix.
Recently $\AD$ was represented as the Pfaffian of an $16\times 16$ matrix \cite{dominicspaper}.
However, with current computer algebra software these matrices are too large to compute their determinants, making this approach to answer Question~\ref{question3} impractical.

Let $P=\mathcal{N}(\AD)$ denote the Newton polytope of the discriminant $\AD$.
Theorem \ref{thm:main} tells us that $P$ has $166\,104$ vertices, split in $7\,132$ $\Sfour$-orbits.
In general, the structure of the Newton polytope of a polynomial is closely related to the geometry of the associated hypersurface. 
For example, the vertices of the Newton polytope are in bijection with the unbounded regions of the amoeba associated to the hypersurface, which can be viewed as a logarithmic shadow of the hypersurface; see \cite[Chapter 6]{GKZ}.
In this way, Theorem \ref{thm:main} sheds light on the discriminantal hypersurface $\nabla_{\tDt}$. 


\section{D-equivalence of regular triangulations}


Let $A=(a_1, \dots , a_n)$ be a finite point configuration in $\R^d$ and let $Q = \conv(A)$ denote its convex hull. 
We always assume that $Q$ has full dimension $\dim(Q)=d$, and that the lattice spanned by $A$ is $\ZZ^d$. That way we do not have to consider the volume with respect to the lattice spanned by $A$, but can rather use the ordinary lattice volume $\vol$ everywhere.

A \emph{triangulation} of $A$ is a collection of simplices $\triang = (\face)_{\face \in \triang}$ with vertices in $A$ such that
\begin{enumerate}
\item $\triang$ is closed under taking faces, i.e. if $\face \in \triang$ and  $\face' < \face$ then also $\face' \in \triang$,
\item $\triang$ covers $Q$, meaning $\bigcup_{\face\in \triang} = Q$, and
\item for any two $\face,\face'\in\triang$ the intersection $\face\cap\face'$ is a face of both $\face$ and $\face'$.
\end{enumerate}
For each triangulation $\triang$ of $A$ we define its \emph{$\gkz$ vector} $\Phi_\triang \in \mathbb{Z}^A$ by
\begin{equation}\label{eq:def_gkz}
\Phi_\triang(a_i) = \sum\limits_{\face \ni a_i} \vol(\face) \enspace ,	
\end{equation}
where the summation is over all maximal simplices of $\triang$ for which $a_i$ is a vertex.
Since $A$ is finite it only admits finitely many triangulations.
Hence the convex hull of all $\gkz$ vectors
\[
\Sigma\text{-poly}(A) = \conv \{\Phi_\triang ~|~ \triang \text{ triangulation of } A\} \subset \R^A \enspace 
\]
is a convex polytope, called the \emph{secondary polytope} of $A$.
A triangulation of $A$ is called \emph{regular} (some prefer \emph{coherent}) if its $\gkz$ vector is vertex of the secondary polytope. 

The theory of regular triangulations and secondary polytopes 
was introduced by Gel'fand, Kapranov and Zelevinsky
\cite{GKZ} and has numerous applications to questions in classic algebraic geometry;
see the monograph of De Loera, Rambau and Santos \cite{Triangulations}.
For instance, it turns out that the secondary polytope of $A$ equals the Newton polytope of the principle $A$-determinant $E_A$ \cite[Chapter 10]{GKZ}.
In some nice cases (for example, in case the associated toric variety $X_A$ is smooth) the principle $A$-determinant can be expressed as 
\begin{equation}\label{eq:factorization}
E_A = \pm \prod_{F < Q} \Delta_{F \cap A} \enspace ,
\end{equation}
where the product is taken over all nonempty faces $F$ of $Q$. 
Here $\Delta_{F \cap A}$ denotes the $(F \cap A)$-discriminant associated to the face $F$. 
In particular, the $A$-discriminant $\Delta_A$ is a factor of $E_A$ since $Q$ itself is also a face. 
Therefore the Newton polytope of the $A$-discriminant, denoted by $\mathcal{N}(\Delta_A)$, is a Minkowski summand of the secondary polytope of $A$,
\begin{equation}\label{eq:minkowski_sum}
	\Sigma\text{-poly}(A) = \mathcal{N}(\Delta_A) + R_A \enspace ,
\end{equation}
where $R_A$ is some rest polytope. 
It follows that for any regular triangulation $\triang$ its $\gkz$ vector $\Phi_\triang$ can be written as
\begin{equation}
	\Phi_\triang = \eta_\triang + r_\triang
\end{equation} 
where $\eta_\triang$ and $r_\triang$ are vertices of $\mathcal{N}(\Delta_A)$ and $P_A$, respectively.
This is a first hint that the vertices $\eta_\triang$ of $\mathcal{N}(\Delta_A)$ can be expressed explicitly in a similar fashion to \eqref{eq:def_gkz}, which we will explain in the following. 
We stick with the notation of \cite[Chapter 11.3]{GKZ}.

Let $\triang$ be a triangulation of $A$. A $j$-dimensional face (simplex) $\face$ of $\triang$ is called \emph{massive} if it is contained in some $j$-dimensional face of $Q=\conv(A)$. In that case the face of $Q$ is unique and we denote it by $\Gamma(\face)$.
In particular, any $d$-face $\face$ of $\triang$ is massive with $\Gamma(\face)=Q$.
Denote by $M_\triang^j(a_i)$ the set of massive $j$-simplices of $\triang$ that have $a_i$ as a vertex. 
We set 
\[
\eta_{\triang,j}(a_i) = \sum \limits_{\face \in M_\triang^j(a_i)} \vol(\face) \enspace.
\] 
Here $\vol(\face)$ is lattice volume of $\face$ in the intersection lattice $\ZZ^d \cap \aff(\face)$.
Note that $\eta_{\triang,d}$ coincides with the $\gkz$ vector $\Phi_\triang$ defined in \eqref{eq:def_gkz}.

\begin{dfn}\label{def:massive_gkz}
We define the \emph{massive $\gkz$ vector} $\eta_\triang \in \mathbb{Z}^A$ of $\triang$ by
\begin{equation}\label{eq:def_massive_gkz}
	\eta_\triang = \sum \limits_{j=0}^{d} (-1)^{d-j} \eta_{\triang,j} \enspace .
\end{equation}
Finally, we say two regular triangulations of $A$ are \emph{$D$-equivalent} if they have the same massive $\gkz$ vector.
\end{dfn}

\begin{thm}{\cite[Chapter 11, Theorem 3.2]{GKZ}}\label{thm:d_equiv_vert}
	Let $\mathcal{N}(\Delta_A)$ be the Newton polytope of the $A$-discriminatnt $\Delta_A$.
	The vertices of $\mathcal{N}(\Delta_A)$ are exactly the points $\eta_\triang$ for all regular triangulations $\triang$ of $A$. 
	Thus, they are in one-to-one correspondence with the D-equivalence classes of regular triangulations of $A$.
\end{thm}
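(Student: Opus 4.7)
The plan is to derive the theorem from the Minkowski decomposition of the secondary polytope that arises from the factorization \eqref{eq:factorization} of the principle $A$-determinant, and then extract the vertex of $\mathcal{N}(\Delta_A)$ associated to each regular triangulation by M\"obius inversion over the face lattice of $Q$. First, since Newton polytopes send polynomial products to Minkowski sums, \eqref{eq:factorization} upgrades \eqref{eq:minkowski_sum} to
\[
\Sigma\text{-poly}(A) \;=\; \mathcal{N}(E_A) \;=\; \sum_{F \leq Q} \mathcal{N}(\Delta_{F \cap A}),
\]
and the same decomposition remains valid beyond the nice cases, at worst with positive integer multiplicities on some factors. At every vertex $\Phi_\triang$ of the secondary polytope the Minkowski sum splits uniquely into vertices of the summands, so there exist vertices $\eta^F_\triang$ of $\mathcal{N}(\Delta_{F \cap A})$ with $\Phi_\triang = \sum_{F \leq Q} \eta^F_\triang$. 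A linear functional singling out $\Phi_\triang$, restricted appropriately, also singles out $\Phi_{\triang|_F}$ in the secondary polytope of $F \cap A$; this shows at once that $\eta^F_\triang$ depends only on $\triang|_F$, and that every vertex of $\mathcal{N}(\Delta_A)$ is realized as $\eta^Q_\triang$ for some regular triangulation.

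Second, I would identify $\eta^Q_\triang$ with the alternating sum in Definition \ref{def:massive_gkz}. Applying the above decomposition inside every face $F$ gives, by induction on $\dim F$, the identity
\[
\Phi_{\triang|_F}(a_i) \;=\; \sum_{G \leq F} \eta^G_\triang(a_i).
\]
By definition $\Phi_{\triang|_F}(a_i)$ is the sum of lattice volumes of $(\dim F)$-simplices of $\triang$ lying inside $F$ that contain $a_i$; since each massive $j$-simplex $\face$ has a \emph{unique} containing face $\Gamma(\face)$, summing these identities across all $j$-dimensional faces $F$ of $Q$ gathers the contributions into $\eta_{\triang,j}(a_i)$. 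Solving for $\eta^Q_\triang$ and grouping the terms of the resulting M\"obius inversion by dimension collapses it to
\[
\eta^Q_\triang \;=\; \sum_{j=0}^{d} (-1)^{d-j}\, \eta_{\triang,j} \;=\; \eta_\triang.
\]

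The main obstacle is this collapse step: although the face lattice of $Q$ is not Boolean and its M\"obius function is not simply $(-1)^{\mathrm{codim}}$, the very specific structure of the $\eta_{\triang,j}$---each massive simplex belongs to a unique containing face---makes the inversion degenerate into the dimension-graded alternating sum. A clean execution is induction on $d$: the inductive hypothesis identifies $\eta^F_\triang$ with the massive $\gkz$ vector of $\triang|_F$ inside $F$ for every proper face $F < Q$, after which isolating $\eta^Q_\triang$ from $\Phi_\triang = \sum_F \eta^F_\triang$ and combining terms by dimension yields the alternating formula. Combining this with the surjectivity from the first step gives the asserted bijection: two regular triangulations yield the same vertex of $\mathcal{N}(\Delta_A)$ iff $\eta_\triang = \eta_{\triang'}$, i.e.\ iff they are $D$-equivalent.
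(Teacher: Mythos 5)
The first thing to say is that the paper itself offers no proof of this statement: it is quoted verbatim from \cite[Chapter 11, Theorem 3.2]{GKZ}, and the surrounding text (\eqref{eq:factorization}, \eqref{eq:minkowski_sum}) only motivates it. So your proposal must be measured against the argument in GKZ, and it does follow the same route: factor the principal $A$-determinant over the faces of $Q$, read this as a Minkowski decomposition of the secondary polytope, split each vertex $\Phi_\triang$ uniquely into vertices of the summands, check via the selecting height function that the component for a face $F$ depends only on $\triang|_F$, and recover $\eta^Q_\triang$ by inclusion--exclusion over the face lattice. The skeleton (uniqueness of the vertex decomposition of a Minkowski sum, compatibility with restriction to faces, surjectivity onto the vertices of a Minkowski summand, and the observation that the bijection with $D$-classes is then a tautology) is sound.

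Two steps, however, are genuinely incomplete. First, multiplicities: \eqref{eq:factorization} as used is the smooth case; in general $E_A=\pm\prod_{\Gamma\le Q}\Delta_{A\cap\Gamma}^{m(\Gamma)}$ with integer multiplicities $m(\Gamma)\ge 1$ \cite[Chapter 10, Theorem 1.2]{GKZ}. You concede this in one clause but then work throughout with $\Phi_\triang=\sum_{F\le Q}\eta^F_\triang$, which is simply false whenever some $m(\Gamma)>1$, while the alternating formula \eqref{eq:def_massive_gkz} is being claimed for general $A$; so you must either restrict to the smooth case (enough for $\tDt$, but not for the theorem as stated) or carry the multiplicities through the inversion, which is exactly where the work in GKZ lies. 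A related bookkeeping point: for a vertex face $F=\{a\}$ one needs GKZ's convention $\Delta_{A\cap F}=c_a$ rather than $1$, since $\Phi_{\triang|_{\{a\}}}(a)=1$; without it your identity $\Phi_{\triang|_F}=\sum_{G\le F}\eta^G_\triang$ already fails at the base of the induction. Second, the collapse that you yourself single out as the main obstacle is only asserted, and it is precisely the content of the theorem. It does work, but the missing ingredient should be named: uniqueness of the containing face $\Gamma(\face)$ gives $\eta_{\triang,j}(a_i)=\sum_{\dim F=j}\Phi_{\triang|_F}(a_i)$, hence $\sum_{j}(-1)^{d-j}\eta_{\triang,j}(a_i)=\sum_{G\le Q}\eta^G_\triang(a_i)\cdot\bigl(\sum_{G\le F\le Q}(-1)^{d-\dim F}\bigr)$, and the inner sum equals $1$ for $G=Q$ and vanishes for every proper face $G$ by Euler's relation applied to the interval $[G,Q]$, i.e.\ to the face lattice of the quotient polytope $Q/G$. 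Without this Euler-characteristic computation (or an equally explicit inductive substitute, with the multiplicities accounted for), the identification $\eta^Q_\triang=\eta_\triang$ --- and with it the theorem --- is not established.
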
 
In view of \eqref{eq:minkowski_sum} this means that a $D$-equivalence class with massive $\gkz$ vector $\eta$ consists of all those regular triangulations whose $\gkz$ vectors have $\eta$ as a unique $\mathcal{N}(\Delta_A)$-summand.

\begin{exa}\label{ex:massive}
	Let $Q$ be the triangle in $\R^2$ with vertices $(0,0), (2,0), (0,2)$ and let $A$ be the set of lattice points contained in $Q$,
	\[
	A = \{ (0,0) , (0,1) , (0,2) , (1,0) , (1,1) , (2,0) \} \enspace .
	\]
	In Figure \ref{fig:2Delta2_triang} we show all $14$ regular triangulations of $A$ together with both their massive $\gkz$ vectors as well as their ordinary $\gkz$ vectors.
	We see that there are five different D-equivalence classes of regular triangulations with associated massive $\gkz$ vectors
	\begin{align*}
		(1,0,1,0,0,1) & = \eta_{\triang_{0}} \enspace , \\
		(1,0,0,0,2,0) & = \eta_{\triang_{1}} = \eta_{\triang_{2}} = \eta_{\triang_{3}} = \eta_{\triang_{4}} \enspace , \\
		(0,0,1,2,0,0) & = \eta_{\triang_{5}} = \eta_{\triang_{6}} = \eta_{\triang_{7}} = \eta_{\triang_{8}}\enspace , \\
		(0,2,0,0,0,1) & = \eta_{\triang_{9}} = \eta_{\triang_{10}} = \eta_{\triang_{11}} = \eta_{\triang_{12}} \enspace , \\
		(0,1,0,1,1,0) & = \eta_{\triang_{13}}  \enspace .	
	\end{align*}
	By Theorem \ref{thm:d_equiv_vert} we know that $\mathcal{N}(\Delta_A)$ has five vertices given by these massive $\gkz$ vectors. 
	Indeed, the $A$-discriminant of a quadratic form
	\[
	f(x,y) = a_{00} + a_{01}y + a_{02}y^2 +  a_{10}x + a_{11}xy + a_{20}x^2	
	\] 
	is well known to be (see \cite[Chapter 13]{GKZ})
	\[
	\Delta_A = a_{00}a_{11}^2 + a_{01}^2a_{20} + a_{02}a_{10}^2 - a_{01}a_{10}a_{11} - 4a_{00}a_{02}a_{20} \enspace .
	\]
	We observe that the exponent vectors of the $A$-dicriminant above are exactly the massive $\gkz$ vectors that we found.
	Here the variables $a_{00}, a_{01}, \ldots , a_{20}$ are ordered lexicographically. 
	In this case all five monomials of $\Delta_A$ correspond to vertices of its Newton polytope $\mathcal{N}_{\Delta_A}$ (a pentagon).
	Figure \ref{fig:2Delta2_secpoly} depicts the secondary polytope of $A$. Here the vertex colors indicate the D-equivalence classes of regular triangulations. 
	
\end{exa}
\begin{figure}\label{fig:2Delta2_triang}
	\centering
	\begin{tikzpicture}[scale=0.6,line cap=round,line join=round,>=triangle 45,x=1.0cm,y=1.0cm]
	\begin{scriptsize}
	\node at (5,-18) {\begin{tikzpicture}
		\draw[fill=black!10] (0,0)--(2,0)--(0,2)--cycle;
		\twoDeltaTwoTriang{}
		\node at (1,-0.5) {$\eta_{\triang_{0}}=(1,0,1,0,0,1)$};
		\node at (1,-1) {$\Phi_{\triang_{0}}=(4,0,4,0,0,4)$};
		\end{tikzpicture}
	};
	\node at (10,-18) {\begin{tikzpicture}
		\draw[fill=yellow!25] (0,0)--(2,0)--(0,2)--cycle;
		\twoDeltaTwoTriang{10/01,11/10,01/11}
		\node at (1,-0.5) {$\eta_{\triang_{13}}=(0,1,0,1,1,0)$};
		\node at (1,-1) {$\Phi_{\triang_{13}}=(1,3,1,3,3,1)$};
		\end{tikzpicture}
	};
	\node at (0,0) {\begin{tikzpicture}
		\draw[fill=red!20] (0,0)--(2,0)--(0,2)--cycle;
		\twoDeltaTwoTriang{00/11}
		\node at (1,-0.5) {$\eta_{\triang_{1}}=(1,0,0,0,2,0)$};
		\node at (1,-1) {$\Phi_{\triang_{1}}=(4,0,2,0,4,2)$};
		\end{tikzpicture}
	};
	\node at (5,0) {\begin{tikzpicture}
		\draw[fill=red!20] (0,0)--(2,0)--(0,2)--cycle;
		\twoDeltaTwoTriang{00/11,01/11}
		\node at (1,-0.5) {$\eta_{\triang_{2}}=(1,0,0,0,2,0)$};
		\node at (1,-1) {$\Phi_{\triang_{2}}=(3,2,1,0,4,2)$};
		\end{tikzpicture}
	};
	\node at (10,0) {\begin{tikzpicture}
		\draw[fill=red!20] (0,0)--(2,0)--(0,2)--cycle;
		\twoDeltaTwoTriang{00/11,10/11}
		\node at (1,-0.5) {$\eta_{\triang_{3}}=(1,0,0,0,2,0)$};
		\node at (1,-1) {$\Phi_{\triang_{3}}=(3,0,2,2,4,1)$};	
		\end{tikzpicture}
	};
	\node at (15,0) {\begin{tikzpicture}
		\draw[fill=red!20] (0,0)--(2,0)--(0,2)--cycle;
		\twoDeltaTwoTriang{00/11,01/11,10/11}
		\node at (1,-0.5) {$\eta_{\triang_{4}}=(1,0,0,0,2,0)$};
		\node at (1,-1) {$\Phi_{\triang_{4}}=(2,2,1,2,4,1)$};
		\end{tikzpicture}
	};	
	\node at (0,-6) {\begin{tikzpicture}
		\draw[fill=blue!15] (0,0)--(2,0)--(0,2)--cycle;
		\twoDeltaTwoTriang{02/10}
		\node at (1,-0.5) {$\eta_{\triang_{5}}=(0,0,1,2,0,0)$};
		\node at (1,-1) {$\Phi_{T_{5}}=(2,0,4,4,0,2)$};	
		\end{tikzpicture}
	};
	\node at (5,-6) {\begin{tikzpicture}
		\draw[fill=blue!15] (0,0)--(2,0)--(0,2)--cycle;
		\twoDeltaTwoTriang{10/01,02/10}
		\node at (1,-0.5) {$\eta_{\triang_{6}}=(0,0,1,2,0,0)$};
		\node at (1,-1) {$\Phi_{\triang_{6}}=(1,2,3,4,0,2)$};
		\end{tikzpicture}
	};
	\node at (10,-6) {\begin{tikzpicture}
		\draw[fill=blue!15] (0,0)--(2,0)--(0,2)--cycle;
		\twoDeltaTwoTriang{02/10,10/11}
		\node at (1,-0.5) {$\eta_{\triang_{7}}=(0,0,1,2,0,0)$};
		\node at (1,-1) {$\Phi_{\triang_{7}}=(2,0,3,4,2,1)$};	
		\end{tikzpicture}
	};
	\node at (15,-6) {\begin{tikzpicture}
		\draw[fill=blue!15] (0,0)--(2,0)--(0,2)--cycle;
		\twoDeltaTwoTriang{10/01,02/10,11/10}
		\node at (1,-0.5) {$\eta_{\triang_{8}}=(0,0,1,2,0,0)$};
		\node at (1,-1) {$\Phi_{\triang_{8}}=(1,2,2,4,2,1)$};
		\end{tikzpicture}
	};
	\node at (0,-12) {\begin{tikzpicture}
		\draw[fill=green!25] (0,0)--(2,0)--(0,2)--cycle;
		\twoDeltaTwoTriang{01/20}
		\node at (1,-0.5) {$\eta_{\triang_{9}}=(0,2,0,0,0,1)$};
		\node at (1,-1) {$\Phi_{\triang_{9}}=(2,4,2,0,0,4)$};
		\end{tikzpicture}
	};
	\node at (5,-12) {\begin{tikzpicture}
		\draw[fill=green!25] (0,0)--(2,0)--(0,2)--cycle;
		\twoDeltaTwoTriang{10/01,01/20}
		\node at (1,-0.5) {$\eta_{\triang_{10}}=(0,2,0,0,0,1)$};
		\node at (1,-1) {$\Phi_{\triang_{10}}=(1,4,2,2,0,3)$};
		\end{tikzpicture}
	};
	\node at (10,-12) {\begin{tikzpicture}
		\draw[fill=green!25] (0,0)--(2,0)--(0,2)--cycle;
		\twoDeltaTwoTriang{01/20,01/11}
		\node at (1,-0.5) {$\eta_{\triang_{11}}=(0,2,0,0,0,1)$};
		\node at (1,-1) {$\Phi_{\triang_{11}}=(2,4,1,0,2,3)$};
		\end{tikzpicture}
	};
	\node at (15,-12) {\begin{tikzpicture}
		\draw[fill=green!25] (0,0)--(2,0)--(0,2)--cycle;
		\twoDeltaTwoTriang{10/01,01/20,01/11}
		\node at (1,-0.5) {$\eta_{\triang_{12}}=(0,2,0,0,0,1)$};
		\node at (1,-1) {$\Phi_{\triang_{12}}=(1,4,1,2,2,2)$};
		\end{tikzpicture}
	};				
\end{scriptsize}	
\end{tikzpicture}
\caption{All $14$ triangulations of the point configuration $A$ from Example \ref{ex:massive} are regular. There are five different D-equivalence classes, distinguished by coloring.}
\end{figure}
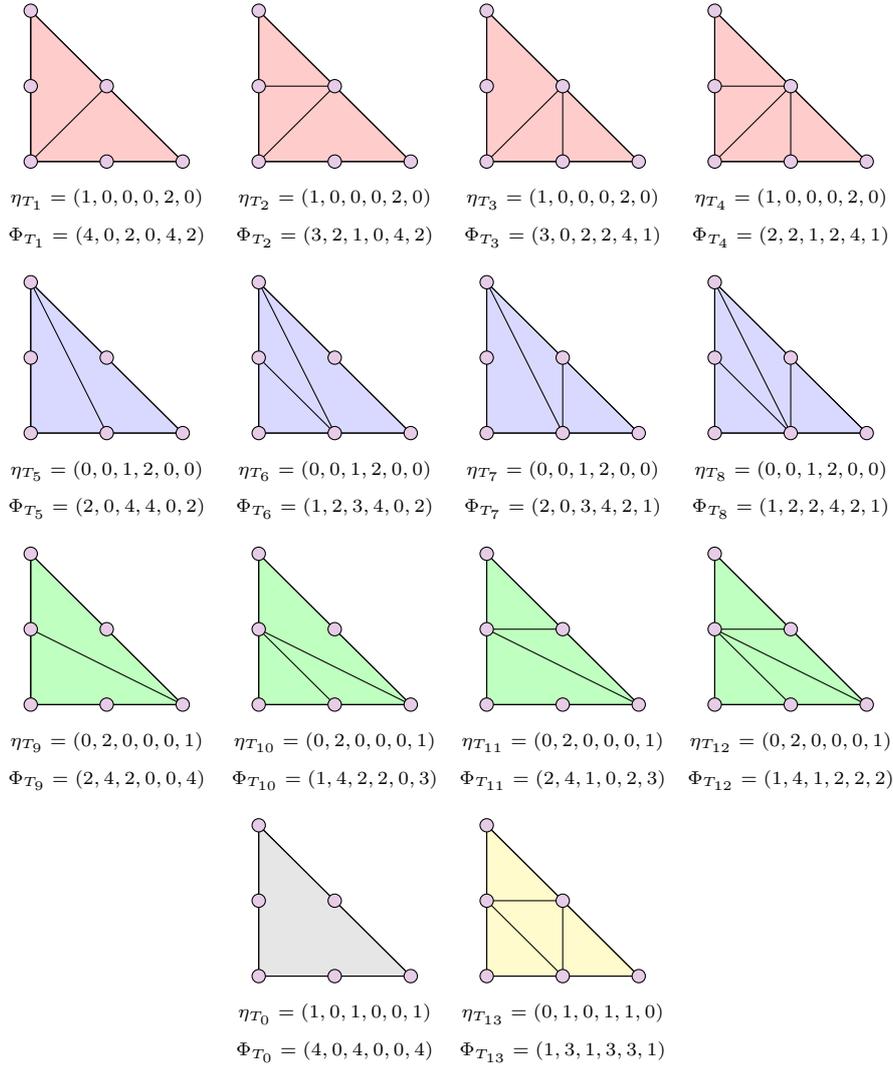

\begin{figure}\label{fig:2Delta2_secpoly}
   \centering
      \includegraphics[scale=0.26]{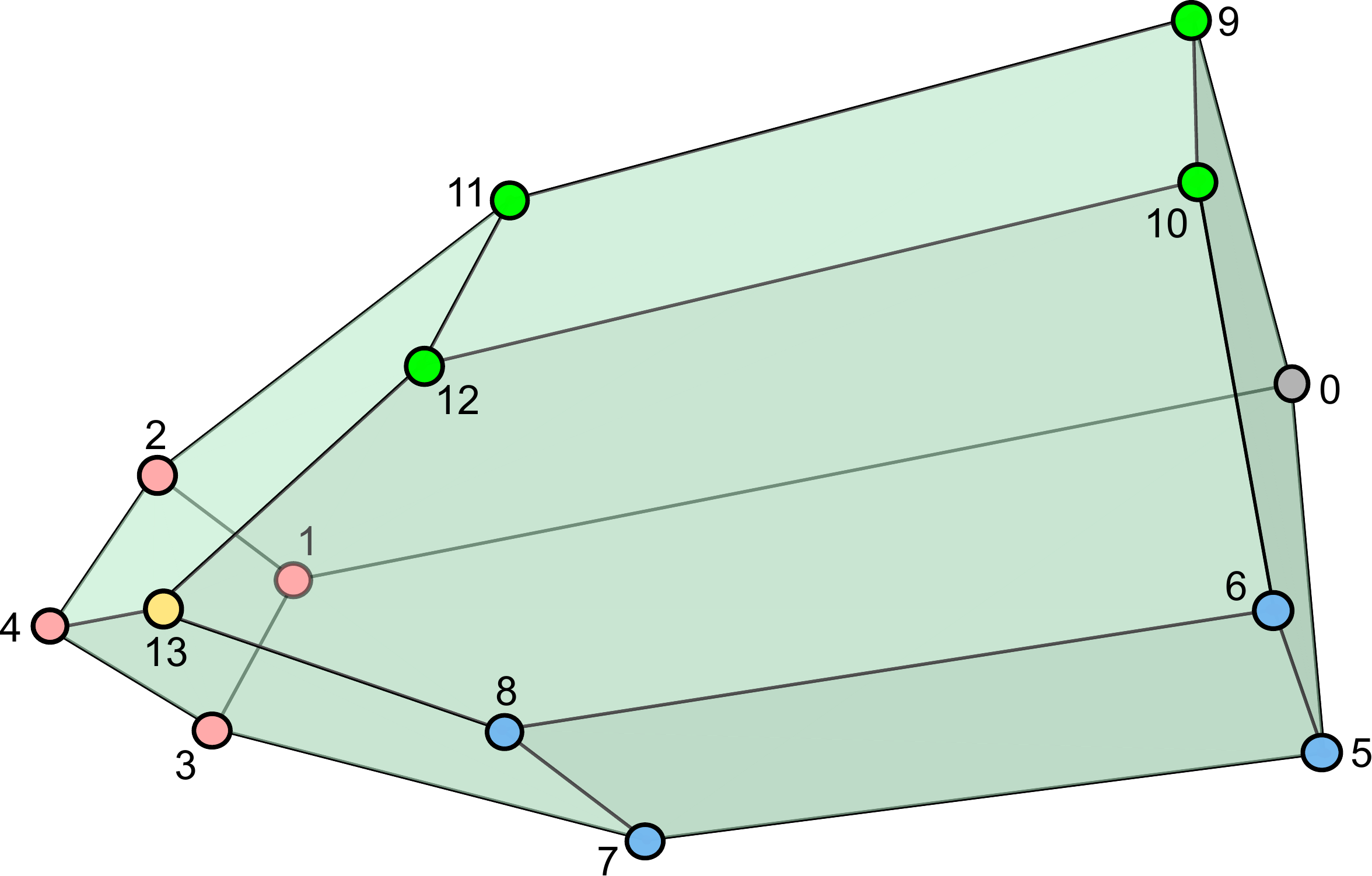}
   \caption{The secondary polytope of the point configuration $A$ from Example \ref{ex:massive} with vertex colors indicating the D-equivalent triangulations.}
\end{figure}

From now on we are only concerned with the point configuration $\tDt$ that supports cubic quaternary forms, i.e. 
\[
\tDt = \{ z \in \ZZ^4_{\geq 0} ~|~ z_1+z_2+z_3+z_4=3 \} \enspace .
\] 
These are the lattice points of the $3$-dilated standard tetrahedron embedded in $\RR^4$.
Our goal is to determine all $D$-equivalence classes of regular triangulations of $A$.
Proceeding as in Example \ref{ex:massive} one way to approach this problem is to first find all regular triangulations of $A$.
This turns out to be a computationally challenging task as the number of regular triangulations explode even for rather ordinary point configurations containing "not too many" points in rather "low" dimensions; see \cite[Chapter 8]{Triangulations}.
However, using the recently developed software framework \mptopcom \cite{mptopcom} we managed to compute all $910\,974\,879$ regular triangulations of $\tDt$ up to $\Sfour$-symmetry (Theorem \ref{thm:num_reg_triang}).
This will be explained later in section \ref{sec:mptopcom}.
We first continue by describing how we use this intermediate result to arrive at the total number of $166\,104$ $D$-equivalence classes of regular triangulations of~$\tDt$ from Theorem \ref{thm:main}.

\section{Massive chains}
The goal is to compute massive $\gkz$ vectors efficiently. There are $910\,974\,879$
regular triangulations of $\tDt$ up to $\Sfour$-symmetry, and we have to
compute the massive $\gkz$ vector for one representative of every orbit. For a
given triangulation $\triang$, the formula given in 
\eqref{eq:def_massive_gkz} involves
\begin{itemize}
\item computing the Hasse diagram of $\triang$ in order to get all faces of $\triang$, 
\item checking which faces of $\triang$ are massive, and
\item computing the lattice volume of the massive faces.
\end{itemize}
Now imagine that this takes $1/10$-th of a second for every triangulation, a
realistic number according to an adhoc implementation in \polymake \cite{polymake}. Then the
overall computation time would amount to $2.9$ years. This computation is
trivially parallelized, but even on $100$ cores it would still take $1.5$
weeks.

By saying that we want to design the computation more efficiently, we mean that
we want to allow longer preprocessing, which is done only once, if in return
the computation of a single massive $\gkz$ vector is sped up. This means that
computation of a single massive $\gkz$ vector might become much slower, whereas
computation of $900$ million massive $\gkz$ vector becomes much faster.

The principle is demonstrated in \mptopcom: For computing the ordinary
$\gkz$ vector, first the volume of any $d$-simplex one can build from the points
in $A$ is computed and stored. Then no volume computation is necessary anymore
throughout the computation, just cache accesses.

The goal of this section is for a given
triangulation $\triang$ to find a representation
\begin{equation}\label{eq:compute_massive_gkz}
	\eta_{\triang}\ =\ \sum_{\face^d\in\triang}\eta(\face^d) \enspace ,
\end{equation}
where the sum runs over the full-dimensional simplices of $\face^d \in
\triang$. Then we can cache the $\eta(\face^d)$ for all possible $\face^d$ that
can occur in $A$ and speed up computations.

Our main tool are massive chains:
\begin{dfn}
Let $\triang$ be a triangulation of a point configuration $A$. 
A sequence $\face^j\le \face^{j+1}\le \ldots
\le \face^{d-1}\le \face^d$ of faces of $\triang$ is called a \emph{massive chain} if for all $k=j,\ldots,d$ we have that $\face^k$ is a massive face of dimension $\dim \face^k\ =\ k$.
\end{dfn}

We say a massive chain \emph{starts in $\face$}, if $\face^j=\face$. This does not mean
that there cannot be a $\face^{j-1}$, it is just a notion for convenience.
Denote by $\MC(\face,\triang)$ the massive chains starting in $\face$ contained in $\triang$.

\begin{exa}
	Consider the triangulation $\triang_1$ from Example \ref{ex:massive}.
	Figure \ref{fig:massive_chains} depicts three sequences of faces of $\triang_1$.
	The first one is not a massive chain since the starting edge is not massive.
	Although all faces of the second sequence are massive faces, it is not a massive chain because the vertex and the triangle differ by two dimensions.
	Finally, the last sequence is a massive chain that starts on a vertex. 
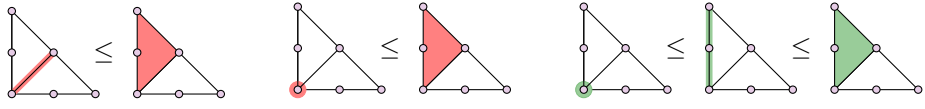
\begin{figure}
	\centering
		\begin{tikzpicture}[scale=0.55]
	\draw (-3,0)--(-1,0)--(-3,2)--cycle;
	\draw [line width=2.5pt, red!50] (-3,0) -- (-2,1);
	\draw (-3,0) -- (-2,1);	
	\draw (-3,0) -- (-3,2);	
	\draw [fill=violet!20] (-3,0) circle (2.5pt);
	\draw [fill=violet!20] (-2,1) circle (2.5pt);
	\draw [fill=violet!20] (-2,0) circle (2.5pt);
	\draw [fill=violet!20] (-1,0) circle (2.5pt);
	\draw [fill=violet!20] (-3,2) circle (2.5pt);	
	\draw [fill=violet!20] (-3,1) circle (2.5pt);
	\node at (-0.8,1) {$\leq$};				
	\draw (0,0)--(1,1)--(2,0)--cycle;	
	\draw [fill=red!50] (0,0)--(1,1)--(0,2)--cycle;
	\draw [fill=violet!20] (0,0) circle (2.5pt);
	\draw [fill=violet!20] (1,1) circle (2.5pt);
	\draw [fill=violet!20] (0,2) circle (2.5pt);
	\draw [fill=violet!20] (0,1) circle (2.5pt);	
	\draw [fill=violet!20] (1,0) circle (2.5pt);	
	\draw [fill=violet!20] (2,0) circle (2.5pt);		
	\end{tikzpicture}
	\qquad
\begin{tikzpicture}[scale=0.55]
\draw (-3,0)--(-1,0)--(-3,2)--cycle;
\draw [red!50,fill=red!50] (-3,0) circle (5.5pt);		
\draw (-3,0) -- (-3,2);	
\draw [] (-3,0) -- (-2,1);
\draw [fill=violet!20] (-3,0) circle (2.5pt);
\draw [fill=violet!20] (-2,1) circle (2.5pt);
\draw [fill=violet!20] (-2,0) circle (2.5pt);
\draw [fill=violet!20] (-1,0) circle (2.5pt);
\draw [fill=violet!20] (-3,2) circle (2.5pt);	
\draw [fill=violet!20] (-3,1) circle (2.5pt);
\node at (-0.8,1) {$\leq$};				
\draw (0,0)--(1,1)--(2,0)--cycle;	
\draw [fill=red!50] (0,0)--(1,1)--(0,2)--cycle;
\draw [fill=violet!20] (0,0) circle (2.5pt);
\draw [fill=violet!20] (1,1) circle (2.5pt);
\draw [fill=violet!20] (0,2) circle (2.5pt);
\draw [fill=violet!20] (0,1) circle (2.5pt);	
\draw [fill=violet!20] (1,0) circle (2.5pt);	
\draw [fill=violet!20] (2,0) circle (2.5pt);		
\end{tikzpicture}
\qquad
	\begin{tikzpicture}[scale=0.55]
\draw (-6,0)--(-4,0)--(-6,2)--cycle;
\draw [mygreen!50,fill=mygreen!40] (-6,0) circle (5.5pt);		
\draw (-6,0) -- (-6,2);	
\draw (-6,0) -- (-5,1);
\draw [fill=violet!20] (-6,0) circle (2.5pt);
\draw [fill=violet!20] (-5,1) circle (2.5pt);
\draw [fill=violet!20] (-5,0) circle (2.5pt);
\draw [fill=violet!20] (-4,0) circle (2.5pt);
\draw [fill=violet!20] (-6,2) circle (2.5pt);	
\draw [fill=violet!20] (-6,1) circle (2.5pt);	
\node at (-3.8,1) {$\leq$};
\draw (-3,0)--(-1,0)--(-3,2)--cycle;
\draw [line width=2.5pt, mygreen!40] (-3,0) -- (-3,2);
\draw (-3,0) -- (-3,2);	
\draw [] (-3,0) -- (-2,1);
\draw [fill=violet!20] (-3,0) circle (2.5pt);
\draw [fill=violet!20] (-2,1) circle (2.5pt);
\draw [fill=violet!20] (-2,0) circle (2.5pt);
\draw [fill=violet!20] (-1,0) circle (2.5pt);
\draw [fill=violet!20] (-3,2) circle (2.5pt);	
\draw [fill=violet!20] (-3,1) circle (2.5pt);
\node at (-0.8,1) {$\leq$};				
\draw (0,0)--(1,1)--(2,0)--cycle;	
\draw [fill=mygreen!40] (0,0)--(1,1)--(0,2)--cycle;
\draw [fill=violet!20] (0,0) circle (2.5pt);
\draw [fill=violet!20] (1,1) circle (2.5pt);
\draw [fill=violet!20] (0,2) circle (2.5pt);
\draw [fill=violet!20] (0,1) circle (2.5pt);	
\draw [fill=violet!20] (1,0) circle (2.5pt);	
\draw [fill=violet!20] (2,0) circle (2.5pt);		
\end{tikzpicture}
\caption{Three sequences of faces of the triangulation $\triang_1$. Only the last one depicts a massive chain.}
\label{fig:massive_chains}
\end{figure}		
\end{exa}

\begin{lemma}\label{lemma:mc}
Every massive face $\face$ of a triangulation $\triang$ is part of some
massive chain. The number of massive chains starting in $\face$ is determined by the smallest face of $Q=\conv(A)$ containing it. 
\end{lemma}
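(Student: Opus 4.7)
The plan is to set up a bijection between $\MC(\face,\triang)$ and the set of maximal flags $\Gamma(\face) = F_{j} < F_{j+1} < \cdots < F_{d} = Q$ in the face lattice of $Q = \conv(A)$, where $j = \dim\face$. Once this is done, both claims follow: at least one such flag exists (so $\face$ does lie in a massive chain), and the number of flags above $\Gamma(\face)$ is visibly a combinatorial function of $\Gamma(\face)$ alone.

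First I would record that when $\face$ is massive of dimension $j$, the face $\Gamma(\face)$ is in fact the smallest face of $Q$ containing $\face$: every face of $Q$ containing $\face$ has dimension at least $j$, and if the dimension is exactly $j$ it must coincide with $\Gamma(\face)$ by the uniqueness clause in the definition of massive. Thus ``the unique $j$-face of $Q$ containing $\face$'' and ``the smallest face of $Q$ containing $\face$'' refer to the same object, which matches the formulation in the lemma.

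The core technical step is a single-step extension: given a massive face $\face^{k}$ of $\triang$ of dimension $k<d$ and any $(k+1)$-face $F$ of $Q$ with $\Gamma(\face^{k}) < F$, I would show that there is a unique $(k+1)$-simplex $\face^{k+1}$ of $\triang$ satisfying $\face^{k} \leq \face^{k+1}$ and $\face^{k+1} \subseteq F$, and that this $\face^{k+1}$ is automatically massive with $\Gamma(\face^{k+1}) = F$. To prove it I would invoke that $\triang$ restricts to a triangulation $\triang|_{F}$ of $F$, observe that $\face^{k}$ is a codimension-$1$ simplex of $\triang|_{F}$ lying in the boundary $\partial F$ (since $\face^{k} \subseteq \Gamma(\face^{k}) \subsetneq F$), and apply the standard fact that a boundary codimension-$1$ simplex in a triangulation is incident to exactly one top-dimensional simplex. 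Massivity then follows from $\face^{k+1} \subseteq F$ together with $\dim\face^{k+1} = \dim F$.

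To conclude I would define the bijection by sending a massive chain $\face = \face^{j} \leq \cdots \leq \face^{d}$ to the flag $\Gamma(\face^{j}) < \cdots < \Gamma(\face^{d}) = Q$, where strict inclusions come from $\dim \Gamma(\face^{k}) = k$; the inverse iterates the extension step along the chosen flag, and uniqueness at each step makes the two assignments mutually inverse. Existence of a massive chain through $\face$ is then immediate because any $j$-face of $Q$ sits inside some maximal flag up to $Q$. The hard part is really the extension step: one must confirm carefully that $\triang|_{F}$ is genuinely a triangulation of $F$ and that the unique top-simplex of $\triang|_{F}$ above $\face^{k}$ is itself a $(k+1)$-simplex of $\triang$, not merely a face of some higher-dimensional simplex. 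Both follow from the triangulation axioms combined with $\dim F = k+1$, but these are the substantive points in the argument.
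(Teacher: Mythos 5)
Your proof is correct and follows essentially the same route as the paper: your single-step extension lemma (the unique $(k+1)$-simplex of the triangulation induced on a $(k+1)$-face of $Q$ containing a given massive $k$-face) is exactly the paper's induction step, and the paper's recursion $\#\MC(F^{j-1})=\sum_{k}\#\MC(F^{j}_{k})$ with anchor $\#\MC(Q)=1$ is precisely the recursive count of the maximal flags in the interval $[\Gamma(\face),Q]$ that you put in bijection with $\MC(\face,\triang)$. Your formulation merely makes that flag count explicit, a mild sharpening rather than a different argument.
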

\begin{proof}
We will proceed by induction over $j = \dim\face$.

Firstly let $j=d$. Then $\face$ is a full-dimensional simplex of the triangulation
$\triang$. It forms a massive chain itself, proving the first statement.
Furthermore there is no other chain starting in $\face$, so the number of chains is $1$, even independently of $Q$.

Let us explain the induction step from $j$ to $j-1$. Take $\face^{j-1}$ a massive face of
$\triang$. Then $\face^{j-1}$ is contained in a unique face $F^{j-1}$ of $Q$. 
Now $F^{j-1}$ is the intersection of all the $j$-dimensional faces of
$\conv(A)$ containing it, i.e.
\[
F^{j-1} \ =\ \bigcap_{k=1}^m F^j_k \enspace .
\]
Fix a face $F^j_k$ and consider its triangulation induced by $\triang$.
Then there is a unique $i$-dimensional simplex $\face^j_k \subset F^j_k$ containing the massive face $\face^{j-1}$.
But then $\face^j_k$ is a massive face of $\triang$. Since by induction hypothesis the first
statement is true for $\face^j_k$, we have just proven it for $\face^{j-1}$. 

To proof the second statement let $\#\MC(\face,\triang)$ denote the number of massive chains starting in a face $\face$. 
By the induction hypothesis, for the faces $\face^j_k$ these numbers are determined by the faces
$F^j_k$, so we write $\#\MC(\face^j_k,\triang)=\#\MC(F^j_k)$.
Thus we get the following recursive formula
\[
\#\MC(\face^{j-1},\triang)\ =\ \sum_{k=1}^m\#\MC(\face^j_k,\triang)\ =\ \sum_{k=1}^m\#\MC(F^j_k) \enspace ,
\]
with the recursion anchor $\#\MC(\face^d,\triang)=\#\MC(Q)=1$.  
The last sum runs over all the $j$-dimensional faces of $Q$ containing $F^{j-1}$. Hence it only
depends on $F^{j-1}$, concluding the proof.
\end{proof}

\begin{prop}\label{prop:massive_chain_application}
Fix a dimension $j$, $0\le j\le d$, and a point $a_i\in A$. Let
$\triang$ be a triangulation of $A$ and denote by $M^j_\triang(a_i)$ the set of all massive $j$-dimensional faces of
$\triang$ that have $a_i$ as a vertex. Let $\MC(\face^j,\face^d)$ be the
massive chains starting in $\face^j$ that are contained in a full-dimensional
simplex $\face^d\in\triang$, and define $M^j_{\face^d}(a_i)$ analogously.
Then
\[
\eta_{\triang,j}(a_i)\ =\ \sum_{\face^j\in M^j_\triang(a_i)}\vol(\face^j)\ =\ 
\sum_{\face^d\in\triang}\left(
   \sum_{\face^j\in M^j_{\face^d}(a_i)}\left(
      \frac{\#\MC(\face^j,\face^d)\cdot\vol(\face^j)}{\#\MC(\face^j,\triang)}
   \right)
\right)
\]
\end{prop}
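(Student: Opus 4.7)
The first equality is just the definition of $\eta_{\triang,j}(a_i)$ recalled from Section~2, so only the second equality needs work. The plan is a double-counting (Fubini-style) argument: I would re-express each volume $\vol(\face^j)$ as a weighted sum indexed by the full-dimensional simplices of $\triang$ that contain $\face^j$, and then exchange the two summations.

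The key identity I would set up first is a decomposition of $\#\MC(\face^j,\triang)$ by the top element of a chain. Every massive chain $\face^j\le\face^{j+1}\le\cdots\le\face^d$ in $\triang$ has a uniquely determined maximal member $\face^d$, which is a full-dimensional simplex of $\triang$. This gives the partition
\[
\#\MC(\face^j,\triang)\ =\ \sum_{\face^d\in\triang}\#\MC(\face^j,\face^d),
\]
where the summand is zero whenever $\face^j\not\subset\face^d$. Lemma~\ref{lemma:mc} guarantees $\#\MC(\face^j,\triang)\ge 1$ for every massive $\face^j$, which makes division by this quantity legitimate. Multiplying the identity above by $\vol(\face^j)/\#\MC(\face^j,\triang)$ then yields the partition-of-unity style identity
\[
\vol(\face^j)\ =\ \sum_{\face^d\in\triang}\frac{\#\MC(\face^j,\face^d)\cdot\vol(\face^j)}{\#\MC(\face^j,\triang)}.
\]

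Next I would sum this identity over $\face^j\in M^j_\triang(a_i)$ and swap the order of the two summations. The remaining bookkeeping step is to identify, for each fixed $\face^d$, the inner index set $\{\face^j\in M^j_\triang(a_i) : \face^j\subset\face^d\}$ with $M^j_{\face^d}(a_i)$. This holds because $\triang$ is a simplicial complex (so every face of $\face^d$ is a face of $\triang$), and because massiveness of $\face^j$ depends only on $\face^j$ and the face lattice of $Q=\conv(A)$, not on any surrounding data of the triangulation.

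The main obstacle I expect is the very first step: verifying cleanly that every chain $\face^j\le\cdots\le\face^d$ in $\triang$ terminates at a \emph{unique} full-dimensional simplex, so that the sets $\MC(\face^j,\face^d)$ genuinely partition $\MC(\face^j,\triang)$ as $\face^d$ ranges over $\triang$. Once that is in place, the rest is a routine interchange of summation.
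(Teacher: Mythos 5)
Your proposal is correct and follows essentially the same route as the paper: both arguments hinge on partitioning the massive chains in $\MC(\face^j,\triang)$ by the unique full-dimensional simplex $\face^d$ at their top, using Lemma~\ref{lemma:mc} to justify dividing by $\#\MC(\face^j,\triang)$, and then exchanging the order of summation. The "obstacle" you flag is immediate, since by definition the top member of a massive chain is a $d$-dimensional simplex of $\triang$ and is the unique such simplex containing the chain.
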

\begin{proof}
First we need to make sure that $\#\MC(\face^j,\triang)\not=0$, but this is
the first statement of Lemma~\ref{lemma:mc}. Now every massive chain in
$\triang$ is contained in a unique full-dimensional simplex $\face^d\in\triang$ by definition. Rewriting
\[
\sum_{\face^j\in M^j_\triang(a_i)}\vol(\face^j)\ =\
\sum_{\face^j\in M^j_\triang(a_i)}\left(\frac{1}{\#\MC(\face^j,\triang)}\cdot\sum_{C\in\MC(\face^j,\triang)}\vol(\face^j) \right),
\]
and inserting that every massive chain is contained in a unique simplex, one
arrives at the desired formula.
\end{proof}
Note that the denominator $\#\MC(\face^j,\triang)$ does not depend on the triangulation $\triang$, but
rather on $Q=\conv(A)$. Hence, let us write $\#\MC(\face^j,Q)$ for this factor.
Then we can define
\[
\eta_i^j(\sigma^d)\ :=\ \sum_{\face^j\in M^j_{\face^d}(a_i)}\left(
      \frac{\#\MC(\face^j,\sigma^d)\cdot\vol(\face^j)}{\#\MC(\face^j,Q)}
   \right)
\]
and we get $\eta_{T,j}(a_i)=\sum_{\sigma^d\in\triang}\eta_i^j(\sigma^d)$ as desired in \eqref{eq:compute_massive_gkz}. The main
advantage is that we can precompute and cache all $\eta_i^j(\sigma^d)$, for any
simplex one can form from the points in $A$. This reduces the effort for
computing a massive $\gkz$ vector of a triangulation to some lookups and
additions. We avoid determining massivity of faces and intersecting with
boundaries completely. Since the triangulations usually vastly outnumber the
simplices we can form from $A$, this will increase performance drastically, if
one needs to compute many massive $\gkz$ vectors.

\begin{exa}\label{ex:massive_chain_application}
Let us use Propostion \ref{prop:massive_chain_application} to recalculate the massive $\gkz$ vector of the traingulation $\triang_1$ from Example \ref{ex:massive}.
There are two full-dimensional simplices in $\triang_1$ which we denote by $\face_L$ and $\face_R$ (left and right).
The massive $\gkz$ vector of $\triang_1$ decomposes as $\eta_{\triang_1}=\eta(\face_L)+\eta(\face_R)$.
Let us compute $\eta(\face_R)$.
It is defined by 
\begin{align*}
\eta(\face_R) & = \eta^0(\face_R) - \eta^1(\face_R) + \eta^2(\face_R) \\
~ & = (1/2,0,0,0,0,1) - (2,0,0,0,1,3) + (2,0,0,0,2,2) \\
~ & = (1/2,0,0,0,1,0) \enspace.
\end{align*}
Let us focus on the sixth (last) entry of this vector corresponding to the bottom right vertex $\face^0=(2,0)$.
As a vertex $\face^0$ is a massive $0$-face of induced lattice volume $\vol(\face^0)=1$. 
Any massive chain that starts at $\face^0$ terminates in the right triangle $\face_R$ and therefore $\eta_6^0(\face_R)=1$.
Furthermore, there are two massive edges in $\triang_1$ incident to the vertex $\face^0$, whose lattice volumes are $1$ and $2$, respectively.
Both edges are uniquely completed to a massive chain by adding the triangle $\face_R$, yielding $\eta_6^1(\face_R)=3$.
Finally, $\eta_6^2(\face_R)=2$ since $\face_R$ has lattice volume $2$.
Similarly we get $\eta(\face_L)=(1/2,0,0,0,1,0)$.
In total we obtain the desired massive $\gkz$ vector $\eta_{\triang_1}=(1,0,0,0,2,0)$.
\end{exa}

\subsection{Implementations and timings}
There are two implementations of massive gkz vectors in \polymake. One is a C++
client using the formula of Definition~\ref{def:massive_gkz} directly. The
other is a simple perl
script using massive chains. The preprocessing time is not directly
extractable, since the cache of vectors $\eta(\face)$ is populated on the fly,
which eliminates the task of constructing all possible simplices on startup.
Since we will always be provided with triangulations, we know that we will only
encounter valid simplices. We ran each of the methods on a batch of
$4\,215\,120$ triangulations, and the resulting times are depicted in
Table~\ref{table:timing}.
\begin{table}
\caption{Timings for computing massive $\gkz$ vectors of a batch of $4\,215\,120$ triangulations via different implementations}\label{table:timing}
\centering
\begin{tabular}{lr}
\toprule
Method & Time in s\\
\midrule
Loop with C++ client (full batch) & 103\,809.90\\
perl script with massive chains (full batch) & 10\,177.12\\
perl script with massive chains (full batch, second run) & 4\,677.97\\
estimated preprocessing time & ca. 6\,000.00 \\
\midrule
Loop with C++ client (1000 triangulations) 
& 40.55\\
perl script with massive chains (1000 triangulations) 
& 1\,137.20\\
\bottomrule
\end{tabular}
\end{table}

The perl script took a little under 3 hours, while the C++ method took about
28 hours, a speedup of roughly factor $10$. For larger batches the speedup
becomes even greater, since the preprocessing time stays constant, as
demonstrated with the second run of the perl script on the same batch, using
the stored cache from the first run. We can use this to estimate the
preprocessing time to be about $6\,000$ seconds, then we get a speedup of more
than factor $20$ for the actual computation. If we extrapolate for $189$
batches, then the C++ method would take almost a year, while the perl script
finishes within $10$ days.

On the contrary look at timings on a small batch of $1\,000$ triangulations in
the last two lines of Table~\ref{table:timing}. Here the perl script is almost
$30$ times slower than the C++ implementation. The timings in
Table~\ref{table:timing} are for one run only and do not depict proper
benchmarkings, but they provide an idea of the strength of using massive
chains. Note that since we populate the cache on the fly, running the perl
script on $1\,000$ triangulations will not populate the full cache.

\section{Computing triangulations with \mptopcom} \label{sec:mptopcom}

In order to compute all regular triangulations of $\tDt$, we used
$\mptopcom$, a software framework for enumerating triangulations in parallel
(\cite{mptopcom, tropical_cubics}). It is based on \polymake (\cite{polymake}),
\topcom (\cite{topcom}) and \mts (\cite{mts}). The core idea of $\mptopcom$ is
the following: The regular triangulations of a point configuration form the
vertices of the secondary polytope and the regular flips are the edges of the
polytope. Ordering the vertices lexicographically we get a so-called
\emph{reverse search structure} (\cite{reverse_search}). In essence a reverse
search structure gives a rooted tree within a graph, such that the membership
of an edge in the tree can be determined locally, i.e. only knowing its
endpoints. This tree is then traversed in a depth first search.

We will not go into detail about the flip graph of triangulations and
triangulation orbits here, but refer the reader to
\cite{Triangulations} and \cite{Imai:2002}. Instead, we will just describe the interesting ingredients around the particular example of $\tDt$, leading to the following result.

\begin{thm}\label{thm:num_reg_triang}
	The point configuration $\tDt = \{ z \in \ZZ^4_{\geq 0} ~|~ z_1+z_2+z_3+z_4=3 \}$ admits $910\,974\,879$ $\Sfour$-orbits of regular triangulations.
\end{thm}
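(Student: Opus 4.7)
The plan is to obtain this count by a full enumeration of regular triangulation orbits using the reverse search machinery of \mptopcom, specialized to the configuration $\tDt$ together with its $\Sfour$-action by coordinate permutation. The starting observation is that the regular triangulations of $\tDt$ are exactly the vertices of the secondary polytope $\Sigma\text{-poly}(\tDt)$, and regular flips correspond to its edges. Fixing a generic lexicographic order on these vertices turns the flip graph into a reverse search tree in the sense of Avis--Fukuda, and this tree can be traversed in parallel without ever materializing the full graph. The $\Sfour$-symmetry is built in by working with canonical representatives of orbits: at each node, one normalizes the current triangulation under $\Sfour$, and an edge is kept only if both endpoints' canonical forms satisfy the reverse search predicate. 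The output is one representative per $\Sfour$-orbit of regular triangulations.

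Concretely, I would first feed the $20$ lattice points of $\tDt$ and generators of $\Sfour \subset S_{20}$ into \mptopcom, pick a starting regular triangulation (for example the placing triangulation from the lex order), and verify regularity of the seed. Then I would launch the \mts-based parallel reverse search; each worker explores a subtree, computes regular flips at a given orbit representative, applies the flip, canonicalizes, and checks the parent relation. The count of visited nodes, after the run terminates, is the desired number $910\,974\,879$.

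The main obstacle is sheer scale rather than conceptual: $\sim 10^9$ orbits must each be visited exactly once, each requiring a flip enumeration, a regularity/lexicographic test (typically an LP), and a graph-isomorphism-style canonicalization under $\Sfour$. Controlling the cost per node, keeping workers load-balanced under the highly irregular subtree sizes produced by reverse search, and ensuring that the symmetry handling is consistent with the reverse search predicate (so no orbit is counted twice or missed) are the delicate points. I would address load balancing via the \mts budgeting strategy and spot-check correctness by (i) verifying that the total number of regular triangulations summed with $\Sfour$-stabilizer orders divided by $|\Sfour|=24$ matches partial counts on smaller symmetry-reduced subconfigurations such as the $2$-dilated tetrahedron, and (ii) rerunning a small random subset of the computation with a different seed triangulation and checking that the resulting orbit representatives agree after canonicalization.

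Finally, because the statement is a computational count, reproducibility is part of the proof: I would archive the \mptopcom input files, software version, and the compressed list of orbit representatives, so that the figure $910\,974\,879$ can be re-verified and also directly re-used as input for the massive $\gkz$-vector computation of Theorem~\ref{thm:main}.
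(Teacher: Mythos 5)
Your proposal is essentially the paper's own approach: the count is established by running \mptopcom's budgeted parallel reverse search (via \mts) on the flip graph of $\Sfour$-orbit representatives of regular triangulations of $\tDt$, with symmetry handled by canonical representatives (the paper uses the lex-maximal $\gkz$ vector), checkpointing for robustness, and sanity checks against the previously known count of regular full triangulations, with the resulting data archived for reproducibility. No substantive difference in method.
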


\subsection{Parallelization}
Parallelization in \mptopcom is done via the so-called budgeted reverse search
(\cite{mplrs, mts}). This means that every worker gets a budget, which
basically is the maximum depth that he is allowed to explore in the depth first
search. Unexplored nodes are then returned to the master, who stores them in a
queue, until they can be redistributed among the idle workers. The \mts
framework will dynamically adjust the budget in order to maintain a constant
high load to exhaust the given resources optimally.

\subsection{Exploiting group action}
The symmetric group $\Sfour$ acts on $\tDt$ by coordinate permutations. 
This group action naturally expands to triangulations. 
An important property of this action is that it keeps volumes of simplices
constant. Thus, it acts on $\gkz$ vectors via permutation and $\mptopcom$ can
exploit this group action. Instead of triangulations, we want to consider
orbits of triangulations. In order to do this, it is necessary to be able to
get a canonical representative from the orbit of a triangulation and we pick
the triangulation with the lex-maximal $\gkz$ vector from the same orbit as
canonical representative. Finding this triangulation is equivalent to sorting a
vector descending with a restricted set of permutations, namely those from
$\group$. This problem has been solved effectively in \cite{mptopcom} and
implemented in $\mptopcom$, making it very unlikely that a full orbit ever
needs to be computed.

Even though we are interested in the actual triangulations, rather than just
orbits, it is much more effective to have \mptopcom utilize the group action
and expand orbits later on. For instance, regularity then only needs to be
checked on the canonical representative, vastly improving performance.

\subsection{Checkpointing}
An important feature of \mts is its ability to write checkpoint files and to
restart from these. The principle is to let the single workers finish their
jobs without assigning them new jobs. When all the workers have finished, the
master queue is written to a file. Long lasting computations on big clusters
are prone to a variety of interferences: power and network outages, disc
failures, random restarts, etc. In our concrete case the computation took ca.
$80$ days. It is very unrealistic to expect a resource intensive computation to
be able to run for $80$ days without any disturbance. Thus, we chose to produce
a checkpoint every $12$ hours. Depending on the input parameters it can take a
long time to produce the checkpoint, i.e. for all workers to finish. We
triggered the end of the computation after $8$ hours and then it took ca. $4$
hours for all workers to finish.

\subsection{Estimating result size}
Another advantage of checkpointing is that one can analyse the intermediate
results. In our case we already know that $\tDt$ has $21\, 125\, 102$
regular and full triangulations up to symmetry (\cite{mptopcom}). These
triangulations will reappear as a subset of the regular triangulations. Since
the sum over all entries of any $\gkz$ vector is a constant, and the
$\gkz$ vector of a full triangulation does not have any zeros, we can deduce
that the lexicographically largest $\gkz$ vectors belong to non-full
triangulations. Hence, we expect the full triangulations to appear rather
towards the end of our computation. Still, the budgeted reverse search is
different from reverse search in certain aspects and the curve we observe is
depicted in Figure~\ref{fig:full_curve}.

\begin{figure}
\centering
\includegraphics{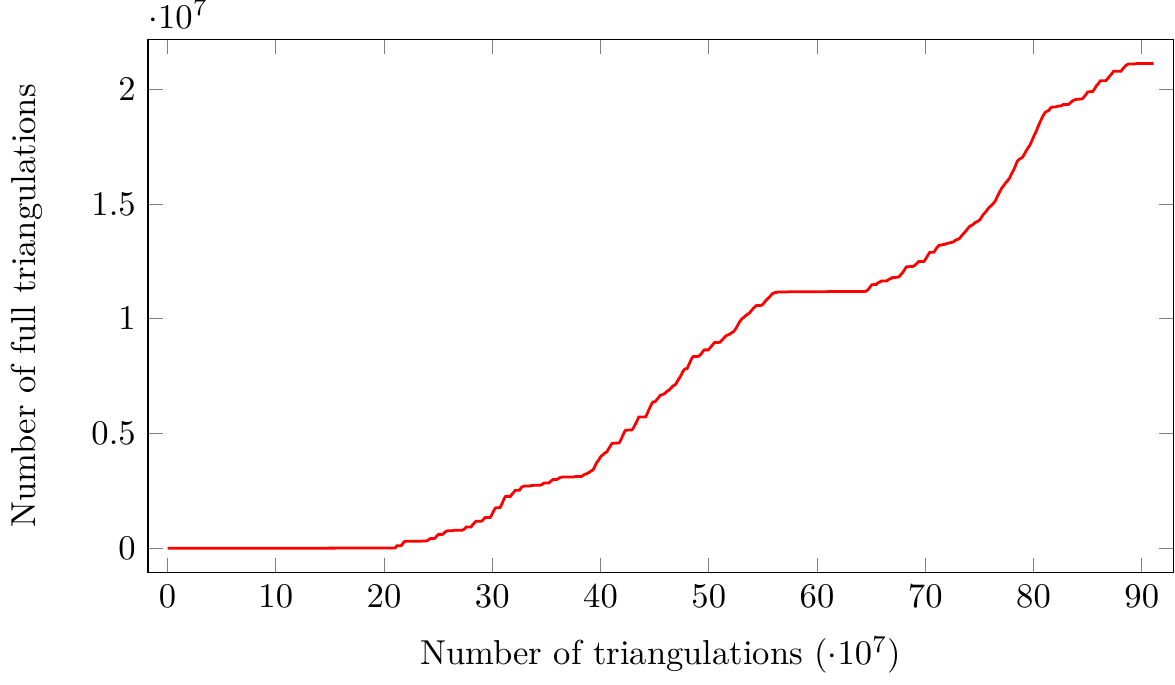}
\caption{Number of full triangulations during computation}\label{fig:full_curve}
\end{figure}

\subsection{Scheduling on large clusters}
Often computations on large clusters are managed via a software scheduler that
one submits jobs to. The scheduler will then arrange those jobs in a way that
optimizes efficiency, i.e. such that idle time is low, jobs with high priority
are executed faster, etc. When submitting a job, one has to specify the
resources needed, like number of nodes, amount of memory, execution time, and
so on. The priority of a job is computed by the scheduler upon submission and
grows with the amount of time that the job has been waiting. Jobs with high
resource demands will get lower priority and will then have to wait until their
priority becomes high enough. This means that if one requires 300 nodes for 220
hours (the maximal available time on the cluster), the job will have to queue
for a long time. On the other hand, if jobs are short, then the scheduler will
often be able to squeeze them in, even if the other resource demands are high.
This is why we chose to make a checkpoint every 12 hours. In total the cluster
ran $189$ jobs on $128$ nodes. This took $6\,892\,460$ seconds, that is
approximately 80 days. On a laptop with four cores, this would be roughly $7$
years.

The result is stored in $189$ compressed files, with a total size of $16.5$
GiB. For compression we used xz \cite{xz}. The uncompressed total size is
$338.2$ GiB.

\subsection{Incomplete result files}
Sometimes it would happen that the number of a triangulations written to the
result file would not agree with the number reported by \mptopcom. There are
several possible reasons for this. Network problems can have prevented a worker
from sending the triangulations to the output worker. The output worker might
have gotten shut down at the end before he managed to write the buffer
completely. When we noticed this, already several subsequent jobs had run. Due
to its parallel nature, the output one gets from one job is not deterministic,
the order of triangulations may vary and also the number. Thus we had to devise
a way to find all triangulations between two checkpoints. One can boil this
down to a problem of graph theory: One has two sets of nodes, a set of starting
nodes and a set of target nodes. To find all nodes between these two sets, one
can run a depth first search from every starting node and never go deeper once
a node belongs to the target set. Back to our original problem, we just needed
to manipulate the \mptopcom source to check for every triangulation that it
found whether it belongs to the target checkpoint, and if it does, exit.

\section{Datasets and code}
\mptopcom can be downloaded at \url{https://polymake.org/mptopcom}. The triangulations of $\tDt$ are available at \url{https://polymake.org/doku.php/dequivalence} as 189 ".xz" files, together with the perl script producing their massive $\gkz$ vectors and the sets of massive $\gkz$ vectors stored as \polymake sets.

\section*{Acknowledgements}
This article adresses the third of the \emph{$27$ questions on cubic surfaces}
\cite{27Q}.  We would like to thank Dominic Bunnett, Michael Joswig, Marta
Panizzut, and Bernd Sturmfels for bringing this problem to our attention and
for helpful discussions.  We are very grateful to Benjamin Lorenz for help and
advice during setting up our software on the TU Berlin clusters and for help
when implementing massive $\gkz$ vectors in \polymake.

\printbibliography

\end{document}